\address{\newline{\normalsize Laboratory of AGHA, Moscow Institute of Physics and Technology, 9 Institutskiy per., Dolgoprudny,
Moscow Region, 141701, Russia}
\newline{\it E-mail address}: karzhemanov.iv@mipt.ru}
\makeatletter\@addtoreset{equation}{section}\makeatother
\makeatletter\@addtoreset{subsection}{equation}\makeatother
\newtheorem{theorem}[equation]{Theorem}
\newtheorem{prop}[equation]{Proposition}
\newtheorem{lemma}[equation]{Lemma}
\newtheorem{cor}[equation]{Corollary}
\newtheorem{theorem-definition}[equation]{Theorem-definition}
\theoremstyle{definition}
\theoremstyle{remark}
\newtheorem{remark}[equation]{Remark}
\newcommand{\com}{\mathbb{C}}
\newcommand{\p}{\mathbb{P}}
\newcommand{\cel}{\mathbb{Z}}
\newcommand{\map}{\longrightarrow}
\thanks{{\it MS 2010 classification}: 14E05, 14D05, 68-04}
\thanks{{\it Key words}: surjective rational map, del Pezzo surface, machine learning}
\begin{document}

\title{Computations and ML for surjective rational maps}

\author{Ilya Karzhemanov}

\begin{abstract}
The present note studies \emph{surjective rational endomorphisms}
$f: \p^2 \dashrightarrow \p^2$ with \emph{cubic} terms and the
indeterminacy locus $I_f \ne \emptyset$. We develop an
experimental approach, based on some Python programming and
Machine Learning, towards the classification of such maps; a
couple of new explicit $f$ is constructed in this way. We also
prove (via pure projective geometry) that a general non-regular
cubic endomorphism $f$ of $\p^2$ is surjective if and only if the
set $I_f$ has cardinality at most $6$.
\end{abstract}

\sloppy

\maketitle

\bigskip

\section{Introduction}
\label{section:introd}

\refstepcounter{equation}
\subsection{The set-up}
\label{subsection:introd-1}

Let $X$ be a complex projective variety and $f: X \dashrightarrow
X$ its rational endomorphism. Denote by $I_f \subset X$ the
(closed) indeterminacy locus of $f$. Then $f$ is called
\emph{surjective} if the induced morphism $X \setminus I_f \map X$
is onto. Such maps were introduced and studied --- from the
algebro-geometric point of view --- in the paper \cite{Ilya-Ilya}
(with $X$ being a projective space $\p^n$). The initial motivation
for this study came from the optimal control theory and
mathematical genetics. Later a relation of surjective rational
maps with algebraic statistics and quantization has also been
realized (see \cite{Ilya-ML} and \cite{Ilya-Quant}). Another
classical subject here is holomorphic dynamics: the given $f$ is
very much akin to a \emph{polynomial-like map} (see
\cite[Ch.\,2]{Dinh-Syb}); hence one could try to define dynamical
degrees of $f$ (with respect to some K\"ahler form on $X$) and
(re)prove standard relations between them and the entropy (cf.
\cite[Remark 1.2]{Ilya-Ilya}).

In the present note, we concentrate on the behavior of surjective
maps in families, aiming to show that for certain rational maps
the surjectivity is a \emph{generic property}. Namely, let us take
$X = \p^2$, so that in projective coordinates any of its rational
endomorphisms looks like this:
$$
f = [f_0: f_1: f_2]: \p^2 \dashrightarrow \p^2,
$$
where $f_i$ are mutually coprime homogeneous polynomials of some
degree $d$. These polynomials span a plane $\Pi$ in the linear
system $|\mathcal{O}_{\p^2}(d)|$ of all degree $d$ curves on
$\p^2$.\footnote{All notions and facts from algebraic geometry
used in this paper can be found in e.g. \cite{Dol-CAG}.} Note that
$I_f \subset \p^2$ is given by equations $f_0 = f_1 = f_2 = 0$.
Then the morphism $f: \p^2 \setminus I_f \map \p^2$ can be defined
in the following coordinate-free way (\emph{Kodaira's
construction}): for any point ${\bf x} \in \p^2 \setminus I_f$,
consider the pencil $\ell \subset \Pi$ of all linear combinations
of $f_i$, which vanish at ${\bf x}$; now identify $\p^2$ with the
dual plane $\Pi^*$ of lines and put
$$
f: {\bf x} \mapsto \ell \in \p^2 = \Pi^*.
$$
So the term \emph{generic}, when applied to the map $f$, simply
means that the corresponding plane $\Pi \subset
|\mathcal{O}_{\p^2}(d)|$ is such (or, if being more formal, one
may state genericity in terms of points on the relevant
Grassmannian).

Further, the $0$-scheme $I_f$ varies together with $f$, of course,
and one can always achieve $I_f = \emptyset$ for appropriate
$f_i$. Hence the map $f$ becomes \emph{regular} and its
surjectivity is straightforward. Thus, in order to have a fruitful
problem here, let us fix a (generic) set $I_f$, consider the
maximal linear system $\Lambda \subset  |\mathcal{O}_{\p^2}(d)|$
with the base locus $\text{Bs}(\Lambda) = I_f$, and treat only
those planes $\Pi$, which belong to $\Lambda$. In this setting,
the generality problem has been solved completely in
\cite{Kul-Zhd} for the \emph{quadratic} case, i.e. when $d = 2$.
More precisely, \cite[Theorem 2]{Kul-Zhd} asserts that for every
given cardinality $\#I_f$ the set of all $\Pi \subset \Lambda$,
defining surjective maps, is \emph{Zariski open} (including the
set $\emptyset$).

Presently, we consider the next, \emph{cubic}, case of $d = 3$.
Our aim is two-fold. Firstly, we extend the main result of
\cite{Kul-Zhd} by showing that all such surjective $f$, with fixed
generic $I_f$, form an open set (see Section~\ref{section:prof}
below). Secondly, we develop an experimental approach towards a
complete description of (cubic) surjective rational maps, which
also yields new \emph{explicit} examples of them (see
Section~\ref{section:ex-calc}).

\begin{remark}
\label{remark:surj-robust} Surjectivity property seems like
another \emph{robust} substitute for the term ``generic'' among
many others that one comes across in classical algebraic geometry.
For instance, given a smooth conic $(q = 0) \subset \p^2$ the set
of triples of linear forms $l_i$, satisfying $q =
\displaystyle\sum_i l_i^2$, can be compactified into the
\emph{variety of self-polar triangles}
--- this is a codimension $3$ linear section of the Grassmannian
$G(3,5)$, --- and the self-polarity condition can be effectively
verified (see \cite[{\bf 2.1.3}]{Dol-CAG}). Another example is the
non-gradient property for a given quadratic endomorphism of $\p^3$
in terms of smoothness of the Jacobi surface (see
\cite{Ilya-non-grad}). Coming back to our setting, a rational map
$f: \p^2 \dashrightarrow \p^2$ is surjective iff there is no
pencil $\ell \in \Pi^*$ as above, with the set-theoretic equality
$\text{Bs}(\ell) = \text{Bs}(\Pi)$ (we will call such $\ell$ an
\emph{unruly pencil}). The latter property can be also checked
explicitly via the direct calculation (cf. the code in
{\ref{subsection:ex-calc-1}} below).
\end{remark}

\refstepcounter{equation}
\subsection{del Pezzo surfaces}
\label{subsection:introd-2}

For what follows, let us fix $I_f$, $\Lambda \subset
|\mathcal{O}_{\p^2}(3)|$ and $\Pi \subset \Lambda$ as in
{\ref{subsection:introd-1}}. We also assume $I_f$ to be
\emph{generic}. In particular, we have $I_f = \{P_1,\ldots,P_{9 -
\delta}\}$ for some $\delta \in \cel$, where the points $P_i$ are
in general position (no three on a line and no six on a conic); it
is easy to see via the parameter count that $2 \le \delta \le 8$
(see {\ref{subsection:prof-1}} below).

Now consider the blow-up $\varphi: X \map \p^2$ of all the $P_i$.
Then $X$ is the celebrated del Pezzo surface of degree $\delta$
(see \cite[Ch.\,8]{Dol-CAG}). It is well-known that $\delta \ge 3$
iff the linear system $|-K_X|$ defines the \emph{anticanonical
embedding} $X = X_{\delta} \subset \p^{\delta} = \Lambda^*$ of $X$
as a degree $\delta$ surface. This allows one to treat the map
$f$, for $\delta \ge 3$, as a linear projection of $X$ onto the
plane $\p^2 = \Pi^*$ (cf. Figure~\ref{fig-2} below). Then the
surjectivity of $f$ is established by a simple geometric argument
(see Theorem~\ref{theorem:main}). In the remaining $\delta = 2$
case, there is an explicitly constructed unruly pencil $\ell$, so
that $f$ is not surjective (see
Corollary~\ref{theorem:bad-pencil-pi-cor}).

\begin{remark}
\label{remark:torsor} One interesting fact about smooth del Pezzo
surface $X$ with $\delta \ge 3$ is that its \emph{universal
torsor} admits an embedding into a homogeneous space $G \slash P$.
Here $G$ a semisimple algebraic group with parabolic subgroup $P$
corresponding to the root system of rank $9 - \delta$ in the
Picard lattice $\text{Pic}(X)$ (see \cite{Serg-Skor}). It would be
interesting to contemplate this phenomenon in terms of the above
surjectivity result for $f$.
\end{remark}

Surjective rational maps and del Pezzo surfaces have already
interacted with each other (see \cite{Ilya-surj-dP}). Let us make
one step further and indicate a relation between (surjective)
cubic maps $f$ and \emph{rational elliptic surfaces}. First of
all, recall that to an arbitrary surjective endomorphism $f: \p^n
\dashrightarrow \p^n, n \ge 1$, one associates a vector bundle
$\mathcal{E} \simeq \Omega^1_{\p^n}$ together with an inclusion
$f^*(\mathcal{E}) \subseteq \mathcal{E} \otimes
\mathcal{O}_{\p^n}(-d+1)$, where $f^*$ is a properly defined
pull-back (see \cite{Ilya-ML}). Thus one may regard every such $f$
as a certain connection $\nabla$ on the (``Schwarzenberger-like'')
bundle $\mathcal{E}$ (cf. \cite[{\bf 2.2.2}]{Dol-CAG}). Secondly,
in our case of $n=2,d=3$ projectivization $\p(\mathcal{E})$ is
identified (via $f$) with a family of \emph{elliptic pencils}
$\ell$, which yields a \emph{flat family} of rational elliptic
surfaces $\frak{X} \map \p^2$. This $\frak{X} \slash \p^2$
resembles one of the \emph{Painlev\'e families} (see e.g.
\cite{Saito-Umemura}, \cite{Kajiwara-et-al}), where now
compatibility with $\nabla$ plays the role of the isomonodromy
condition.

In view of the present discussion, it is tempting to look for
(hidden) symmetries associated with $f$, which one may construct
in terms of sublattices in $\text{Pic}(X)$ for the fibers $X
\subset \frak{X}$ (cf. Remark~\ref{remark:torsor}). Such instances
as modular transformations, or q-mutations as their quantum analog
(see \cite{Usnich}), plus various types of dualities and
correspondences (like the S-duality in the del Pezzo case --- see
e.g. \cite{King}), are awaiting for their way towards surjective
rational maps.\footnote{Another example of how the geometry of del
Pezzo surfaces can be applied to (wild) automorphisms of
threefolds is contained in \cite{Ilya-dP-iso}.} Finally, we can
not help mentioning the \emph{spectral data} approach in this
context (cf. \cite{Hitchin}), which also has some interactions
with surjectivity and quantization (see \cite{Ilya-Quant}).

\refstepcounter{equation}
\subsection{Experiments}
\label{subsection:introd-3}

The use of various neural networks is so ubiquitous in today's
scientific research that it is difficult to bypass this subject.
Presently, we have developed an experimental approach, whose
ultimate goal is an effective description/classification of all
surjective rational endomorphisms of $\p^2$ (not necessarily
having $d = 3$). It goes as follows. First one inspects all
dominant maps $\p^2 \dashrightarrow \p^2$ over a small finite
field $\mathbb{F}_p$ and generates a table (aka data set) by the
rule (see {\ref{subsection:ex-calc-1}} below): $f \mapsto {\bf 1}$
(surjective) or ${\bf 0}$ (not surjective). After this one learns
the table via NN and generates a function $f \mapsto \Psi(f)$ (a
``measure of surjectivity''), now defined for \emph{all $f$ over
$\com$}, aiming to predict whether a given map is surjective or
not (see {\ref{subsection:ex-calc-2}}).\footnote{One may regard
this reasoning as a kind of ``numerical Hasse principle''. It is
worth to compare this principle with various \emph{(topological)
persistency} phenomena that one encounters in applied sciences
(see e.g. the survey \cite{Carlsson}).} Note that this line of
thought is not new and resembles the one of \emph{Natural
Intelligence} as demonstrated in the wonderful book
\cite{Arnold-finite-dyn}. There, a lot of numerical experiments
over small $\mathbb{F}_p$'s, concerning finite dynamics,
statistics of discrete functions and finite projective structures,
were carried out (by hand!), after what fruitful conjectures got
formulated and even proved in some cases.

Further, as is typical to the experiments, their output is often
confusing and should be taken with some proviso. Our small project
is not an exception. The major issue is that the function $\Psi$
does not take \emph{integer} values and hence it is not clear,
given the value $\Psi(f)$, how ``far'' the map $f$ is from being
surjective (this puzzle creates, obviously, a groundwork for
future research). Furthermore, the behavior of $\Psi$ shows that
the openness property of surjective cubic maps, discussed earlier
in {\ref{subsection:introd-2}}, \emph{does not} hold over
$\mathbb{F}_2$. Still, the outcome of our calculations was not
empty, and we managed to construct a couple of non-trivial
surjective planar maps with $d = 3$; here we could not completely
avoid the usage of NI (see Propositions~\ref{theorem:4-dp-surj}
and \ref{theorem:3-dp-surj}, as well as the whole
Section~\ref{section:ex-calc} for further discussion and details).

In conclusion, let us refer to the survey \cite{Gukov-et-al} for
an excellent overview of how the Machine Learning applies to
various questions of pure science, including extremal Calabi--Yau
metrics, knots and string theory. We hope that surjective rational
maps will make their path to these advanced subjects in future.

\bigskip

\section{Surjectivity results}
\label{section:prof}

\refstepcounter{equation}
\subsection{The case $\delta \le 2$}
\label{subsection:prof-1}

Note that passing through the (general) points $P_1,\ldots,P_{9 -
\delta}$ as above imposes $9 - \delta$ independent linear
conditions on the linear system $|\mathcal{O}_{\p^2}(3)| \simeq
\p^9$. Hence we get $0 \le \delta \le 9$. Furthermore, if $\delta
\le 1$, then $\dim \Lambda < 2$ and there is no plane $\Pi \subset
\Lambda$.

Now let us assume that $\delta = 2$. Then we obtain $\Pi =
\Lambda$ and the following holds:

\begin{prop}
\label{theorem:bad-pencil-pi} There exists an unruly pencil $\ell
\subset \Pi$.
\end{prop}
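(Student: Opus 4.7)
The plan is to exploit the del Pezzo geometry. Let $\varphi: X \map \p^2$ be the blow-up at $P_1, \ldots, P_7$; then $X$ is a smooth del Pezzo surface of degree $2$, its anticanonical linear system $|-K_X|$ coincides (via $\varphi$) with $\Pi = \Lambda$, and the anticanonical morphism $\pi: X \map \Pi^* = \p^2$ is a double cover, so $f = \pi \circ \varphi^{-1}$ on $\p^2 \setminus I_f$ by Kodaira's construction. The key translation step is: for a pencil $\ell \subset \Pi$ corresponding to a point $[\ell] \in \Pi^*$, the base locus of $\ell$ on $X$ coincides with the scheme-theoretic fiber $\pi^{-1}([\ell])$, of length $\deg \pi = (-K_X)^2 = 2$. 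Hence $\ell$ is unruly iff both points of $\pi^{-1}([\ell])$ lie on the exceptional locus $\bigcup_{i=1}^{7} E_i$, $E_i := \varphi^{-1}(P_i)$, so that their images under $\varphi$ add no new base point to $I_f$.

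To produce such an $\ell$, I use that $E_i \cdot (-K_X) = 1$, which forces $\pi|_{E_i}$ to be an isomorphism onto a line $L_i := \pi(E_i) \subset \Pi^*$. A short numerical check shows $L_1 \ne L_2$: otherwise $\pi^{*}(L_1) \in |-K_X|$ would contain both $E_1$ and $E_2$, and equating $(-K_X)$-degrees forces $\pi^{*}(L_1) = E_1 + E_2$; but this contradicts $(E_1+E_2)^2 = -2 \ne 2 = (-K_X)^2$. Now take $[\ell_0] := L_1 \cap L_2$, the unique intersection point of these two distinct lines in $\Pi^*$. The preimage $\pi^{-1}([\ell_0])$ contains one point of $E_1$ and one of $E_2$; since $E_1 \cap E_2 = \emptyset$, these are two distinct points that exhaust the length-$2$ fiber. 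By the translation above, the corresponding pencil $\ell_0 \subset \Pi$ is therefore unruly.

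The one conceptual step requiring non-trivial unwinding is the identification $\mathrm{Bs}(\ell)|_X = \pi^{-1}([\ell])$. This follows from Kodaira's construction: by definition, $\pi(x)$ is the hyperplane of divisors in $\Pi$ passing through $x$, so $x$ lies in $\mathrm{Bs}(\ell)$ iff the line $\ell$ is contained in this hyperplane, iff (by duality in $\Pi \cong \p^2$) $\pi(x) = [\ell]$. Once this is in place, the rest reduces to the brief intersection-theoretic verifications on $X$ carried out above, all of which use only the standard Picard-lattice data of a degree-$2$ del Pezzo surface.
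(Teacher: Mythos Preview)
Your argument is correct and takes a genuinely different route from the paper. The paper stays on $\p^2$ and argues by a direct parameter count: it picks a cubic $C \in \Pi$ having $P_1$ as an inflection point, then takes $\ell$ to be the pencil of members of $\Pi$ sharing the tangent line of $C$ at $P_1$; intersecting $C$ with another member $Z \in \ell$ gives $\text{Bs}(\ell) = \{P,P_1,\ldots,P_7\}$ with $P_1$ of multiplicity at least $2$, and a dimension count ($9-6-2-1=0$) forces the residual ninth point $P$ to coincide with one of the $P_i$. The resulting unruly pencil has base multiplicities $(3,1,\ldots,1)$ at the $P_i$.

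Your construction instead passes to the degree-$2$ del Pezzo $X$ and exploits that the anticanonical map is a double cover $\pi$; you then read off an unruly pencil as the point $L_1 \cap L_2 \in \Pi^*$, where $L_i = \pi(E_i)$ are (distinct) bitangent lines. Your translation $\text{Bs}(\ell)\big|_X = \pi^{-1}([\ell])$ is exactly right and makes the criterion ``$\ell$ unruly $\Leftrightarrow \pi^{-1}([\ell]) \subset \bigcup E_i$'' transparent; the flatness of $\pi$ guarantees every fiber has length $2$, so two distinct preimages on $E_1,E_2$ exhaust it. Note that your pencil has base multiplicities $(2,2,1,\ldots,1)$, so you have actually produced a \emph{different} unruly pencil than the paper's; the paper's pencil corresponds, in your picture, to a ramification point of $\pi$ lying on $L_1$.

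What each approach buys: the paper's argument is self-contained on $\p^2$ and needs no blow-up. Yours is more structural, meshes seamlessly with the framework the paper uses for $\delta \ge 3$ (where the same diagram \eqref{diag} appears), and in fact classifies \emph{all} unruly pencils as the finite set $\{[\ell] : \pi^{-1}([\ell]) \subset \bigcup_i E_i\}$, namely the pairwise intersections $L_i \cap L_j$ together with the branch points of $\pi$ on each $L_i$.
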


\begin{proof}
Note that by the parameter count as above there is a cubic $C \in
\Pi$, which has $P_1$ as its inflection point. In turn, this $C$
is contained in a pencil $\ell$ of cubics from $\Pi$, sharing a
common tangent with $C$ at $P_1$. Take some $Z \in
\ell\setminus\{C\}$. Then we have
$$
C \cap Z = \{P,P_1,P_2,\ldots,P_7\}
$$
for some point $P$. Now, if $P \ne P_1$, then again the parameter
count gives $\dim \ell = 9 - 6 - 2 - 1 = 0$ (``$1$'' is for
passing through extra $P$), a contradiction. Hence $P = P_1$ (see
Figure~\ref{fig-1}) and $\ell$ is unruly.

\bigskip

\begin{figure}[h]
\includegraphics[scale=1.1]{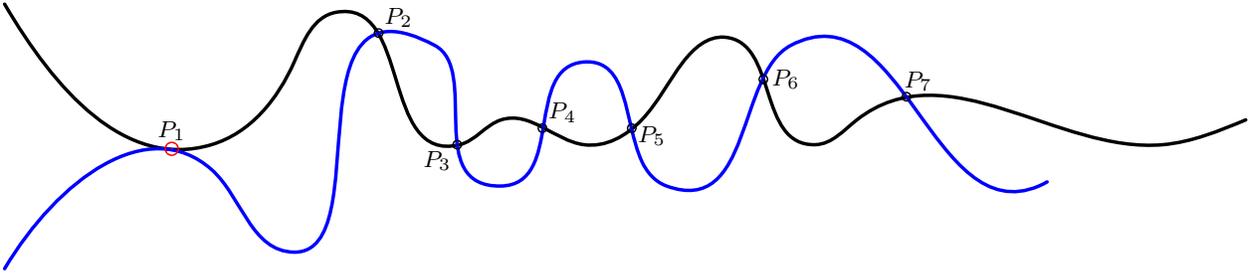}
\caption{Two curves $C$ and $Z$}\label{fig-1}
\end{figure}

\end{proof}

From Proposition~\ref{theorem:bad-pencil-pi} and
Remark~\ref{remark:surj-robust} we deduce the following:

\begin{cor}
\label{theorem:bad-pencil-pi-cor} Generic rational endomorphism
$f$ of the plane, having cubical components and $\#I_f = 9 -
\delta \ge 7$, is \emph{not surjective}.
\end{cor}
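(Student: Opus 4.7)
The plan is to read this corollary as an immediate bookkeeping step that combines Proposition~\ref{theorem:bad-pencil-pi} with the surjectivity criterion recalled in Remark~\ref{remark:surj-robust}. First I would translate the numerical hypothesis into a statement about $\delta$: the assumption $\#I_f = 9-\delta \ge 7$ forces $\delta \le 2$, and combined with the lower bound $\delta \ge 2$ coming from the parameter count in {\ref{subsection:prof-1}}, this pins down $\delta = 2$. Here ``generic'' in the statement means exactly that the seven points $P_1,\ldots,P_7 \in I_f$ are in general position in the sense of {\ref{subsection:introd-2}} (no three collinear, no six on a conic), which is what is needed to invoke the parameter counts underlying Proposition~\ref{theorem:bad-pencil-pi}.

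Second, I would note that in this $\delta = 2$ regime the maximal linear system $\Lambda \subset |\mathcal{O}_{\p^2}(3)|$ has $\dim \Lambda = 9 - 7 = 2$, so necessarily $\Pi = \Lambda$; in particular the plane associated with $f$ is uniquely determined by $I_f$, and Proposition~\ref{theorem:bad-pencil-pi} applies verbatim to it. This produces an unruly pencil $\ell \subset \Pi$. Finally, Remark~\ref{remark:surj-robust} tells us that $f$ is surjective if and only if no such $\ell$ exists, so the existence of $\ell$ immediately yields that $f$ is not surjective, completing the argument.

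Honestly, there is no real obstacle here: the geometric content lives entirely in Proposition~\ref{theorem:bad-pencil-pi}, and the corollary is only a matter of translating the cardinality condition into the degree $\delta$ and then invoking the already-stated equivalence between surjectivity and the absence of unruly pencils. The one place where some care is needed is making the ``generic'' quantifier in the statement compatible with the general-position hypothesis used inside Proposition~\ref{theorem:bad-pencil-pi}; this is routine, since the latter is a Zariski-open condition on the configuration of the $P_i$.
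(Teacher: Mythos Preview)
Your proposal is correct and matches the paper's own argument exactly: the corollary is deduced immediately from Proposition~\ref{theorem:bad-pencil-pi} together with the surjectivity criterion in Remark~\ref{remark:surj-robust}, after the straightforward observation that $\#I_f \ge 7$ forces $\delta = 2$ and hence $\Pi = \Lambda$. There is nothing to add.
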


\refstepcounter{equation}
\subsection{The case $\delta \ge 3$}
\label{subsection:prof-2}

Let us resolve indeterminacies of the map $f: \p^2 \dashrightarrow
\p^2$ via the blow-up of $I_f$; this is possible due to the
generality assumption (cf. {\ref{subsection:introd-2}}):

\begin{equation}
\label{diag} \xymatrix{
& X \ar[dl]_\varphi \ar[dr]^{\psi} \\
\p^2 \ar@{-->}[rr]^{f} && \p^2}
\end{equation}
Here $\varphi$ is the blow-up of $P_1,\ldots,P_{9 - \delta}$, so
that the birational transform $\varphi_*^{-1}(\Lambda)$ coincides
with the linear system $|-K_X|$ for the canonical class $K_X =
\varphi^*(K_{\p^2}) + E_1 + \ldots + E_{9 - \delta}$, where $E_i
:= \varphi^{-1}(P_i)$ are the exceptional divisors. It follows
that $\psi$ coincides with a \emph{regular} projection of the
degree $\delta$ smooth surface $X = X_\delta \subset \p^\delta =
\Lambda^*$ onto the plane $\p^2 = \Pi^*$.

Let us fix a point $o \in \p^2$. In order to show that $f^{-1}(o)
\ne \emptyset$ it suffices to check the condition $\psi^{-1}(o)
\not\subset E_i$ for all $1 \le i \le  9 - \delta$ (cf.
\cite[Proposition 3]{Kul-Zhd}). Here is the main result of this
section:

\begin{theorem}
\label{theorem:main} In the previous setting, we have
$\psi^{-1}(o) \not\subset E_i$ for all $i$ and $o$, so that the
map $f$ is surjective.
\end{theorem}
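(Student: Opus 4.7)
My plan is to pass to the anticanonical embedding of $X$, where $\psi$ becomes a linear projection, and then contradict any hypothetical set-theoretic containment $\psi^{-1}(o) \subseteq E_i$ by a direct linear-algebraic count on plane cubics. Diagram (\ref{diag}) identifies $\psi$ with the restriction to $X = X_\delta \subset \p^\delta = \Lambda^*$ of the linear projection $\p^\delta \dashrightarrow \p^2 = \Pi^*$ from the center $C := \mathrm{Ann}(\Pi)$ of dimension $\delta - 3$. Base-point-freeness of $\Pi$ on $X$ translates to $C \cap X = \emptyset$, making $\psi$ a finite surjective morphism of degree $(-K_X)^2 = \delta$. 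Each exceptional $E_i$ is a $(-1)$-curve with $-K_X\cdot E_i = 1$ and hence becomes a line in $\p^\delta$; since this line is disjoint from $C$, the restriction $\psi|_{E_i}: E_i \to \p^2$ is an isomorphism onto a line $L_i \subset \p^2$. Consequently $|\psi^{-1}(o) \cap E_i| \le 1$ for every $o$.

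Next, I would assume for contradiction that $\psi^{-1}(o) \subseteq E_i$ as a set. Then the set-theoretic fiber reduces to a single point $q \in E_i$, whereas the scheme-theoretic fiber has length $\delta$, so $\psi^{-1}(o) = \delta \cdot q$ is a fat point of length $\delta$ concentrated at $q$. Picking two generators $\tilde D_1, \tilde D_2$ of the pencil $o^* \subset \Pi$ whose base locus cuts out this fat point, and letting $\bar D_1, \bar D_2 \subset \p^2$ be the corresponding cubics through $I_f$, the blow-up formula $\tilde D_j = \varphi^* \bar D_j - \sum_k E_k$ combined with $(\tilde D_1 \cdot \tilde D_2)_q = \delta$ yields $(\bar D_1 \cdot \bar D_2)_{P_i} = \delta + 1$ (with simple crossings at the remaining $P_k$), so that $\bar D_2$ vanishes on the length-$(\delta+1)$ infinitesimal neighborhood of $P_i$ on the curve $\bar D_1$.

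The closing step---and the main obstacle---is to rule this out by a dimension count. Vanishing on that length-$(\delta+1)$ fat subscheme of $\p^2$ imposes $\delta + 1$ linear conditions on plane cubics; one of these ($\bar D_2(P_i) = 0$) is already built into $\Lambda$, so inside the $(\delta+1)$-dimensional vector space $\Lambda$ only $\delta$ extra linear conditions are added. For generic $I_f$---precisely under the general-position hypothesis (no three on a line, no six on a conic)---these $\delta$ conditions are independent of each other and of the $(9-\delta)$ base-point conditions defining $\Lambda$, so they cut out a $1$-dimensional vector subspace of $\Lambda$. This subspace contains $\bar D_1$ trivially and therefore collapses to $\langle \bar D_1 \rangle$ projectively: no cubic $\bar D_2 \notin \langle \bar D_1 \rangle$ satisfies the tangency condition, contradicting the assumption that $\bar D_1, \bar D_2$ span a genuine pencil. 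Hence $\psi^{-1}(o) \not\subset E_i$ for all $o$ and $i$, and by the reduction cited just before the theorem statement, $f$ is surjective. The crux of the argument is verifying the independence of the $\delta$ tangency-at-$P_i$ conditions from the $(9-\delta)$ point-passage conditions on $\Lambda$, which is exactly where the general-position hypothesis on $I_f$ has to be used in earnest.
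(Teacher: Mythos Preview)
Your strategy is genuinely different from the paper's. The paper works on the anticanonical model and studies the \emph{residual curve}: writing $\psi^{-1}\psi(E_i)=E_i+R$ with $R\in|-K_X-E_i|$, it shows (via base-point-freeness of $|R|$ and Bertini) that $R$ is smooth, then notes $R\cdot E_i=2$ and argues that $\psi|_R$ is unramified at the two points of $R\cap E_i$; this last step is exactly where the paper invokes genericity of the plane $\Pi$ (equivalently of the projection center $\Sigma$). Your route via local intersection numbers $(\bar D_1\cdot\bar D_2)_{P_i}=\delta+1$ and a dimension count on plane cubics is an attractive alternative, and the intersection computation is correct.

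The gap is in the final independence claim. You assert that the $\delta$ order-of-contact conditions at $P_i$ along $\bar D_1$ are independent of the $(9-\delta)$ point conditions \emph{under the general-position hypothesis on $I_f$ alone}. This is false: as written, your argument never uses genericity of $\Pi$, so it would prove surjectivity for \emph{every} plane $\Pi\subset\Lambda$ with $C\cap X=\emptyset$. But that stronger statement fails already for $\delta=3$. Take $X=X_3\subset\p^3$ and a line $E_i\subset X$; at each $q\in E_i$ the tangent plane cuts $X$ in $E_i+Q_q$ with $Q_q$ a conic through $q$, and the tangent line $\ell_q:=T_qQ_q$ satisfies $(\ell_q\cdot X)_q=3$. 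As $q$ runs over $E_i$ the lines $\ell_q$ sweep a surface $S\subset\p^3$, and any $C\in S\setminus X$ gives a legitimate projection center (so $\psi$ is a finite morphism) with $\psi^{-1}(o)=3q\subset E_i$ for the corresponding $o$. For such $\Pi$ your count must break down, i.e.\ the tangency conditions are \emph{not} independent for the particular $\bar D_1$ arising there---even though $I_f$ is in general position.

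So the ``crux'' you flag cannot be discharged by the del Pezzo condition on $I_f$; you must bring in genericity of $\Pi$ (equivalently, of $\Sigma$) and argue that the bad locus of $\bar D_1$'s is avoided for a generic plane. That is precisely the content of the paper's unramified-at-$o_1,o_2$ step, packaged differently.
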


\begin{proof}
Let $\Sigma \subset \p^\delta$ be the center of the projection
$\psi$
--- this is a subspace of codimension $3$ and $\psi$ is
given by the linear system of all hyperplanes passing through
$\Sigma$. In particular, since $E_i \subset X$ is a line, its
image $\psi(E_i) \subset \p^2$ is also a line. Then we have
$\psi^{-1}\psi(E_i) = X \cap H = E_i + R$ for some hyperplane $H$
and a curve $R \sim -K_X - E_i$. It follows from the generality
assumption on $f$ that $H$ is a generic one. Then we claim the
following:

\begin{lemma}
\label{theorem:r-is-smooth} $R$ is smooth.
\end{lemma}

\begin{proof}
Consider the projection $\p^\delta \dashrightarrow \p^{\delta-2}$
from the line $E_i$. This rational map can be regularized via the
single blow-up $\nu: W \map \p^\delta$ of $E_i$. We have
$\nu_*^{-1}(X) \simeq X$ and the restriction of the obtained
morphism $W \map \p^{\delta-2}$ to $X \subset W$ coincides, by
construction, with the one given by the linear system $|R|$. Hence
we get $\text{Bs}(|R|) = \emptyset$. Then $R$ is smooth by
Bertini's theorem and generality of $H$.
\end{proof}

Further, note that $R \cdot E_i = 2$, which means $R \cap E_i =
\{o_1,o_2\}$ for some (distinct) points $o_j$. Now that $\psi$
maps the union $R \cup E_i$ onto $\psi(E_i)$, in order to prove
our Theorem it suffices to show the morphism $\psi\big\vert_R$ is
\emph{not ramified} at $o_1,o_2$ (cf.
Lemma~\ref{theorem:r-is-smooth} and Figure~\ref{fig-2}). But this
is immediate
--- again by generality of $f$ (and $\Sigma$).

Thus we obtain $\psi^{-1}(o) \not\subset E_i$ for all $i$ and $o$.
Theorem~\ref{theorem:main} is proved.

\bigskip

\begin{figure}[h]
\includegraphics[scale=1.4]{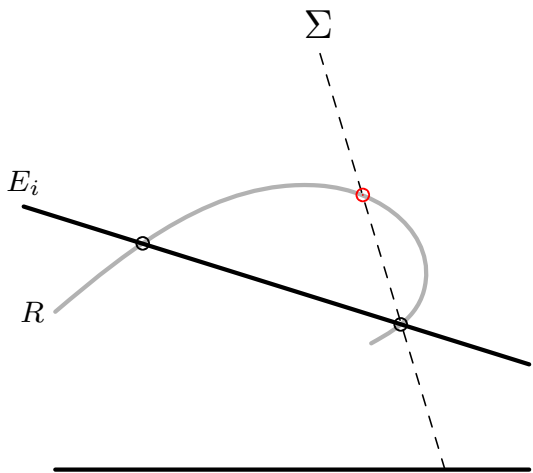}
\caption{Projection onto $\psi(E_i)$}\label{fig-2}
\end{figure}

\end{proof}

\bigskip

\section{Experiments and examples}
\label{section:ex-calc}

\refstepcounter{equation}
\subsection{Quartic case}
\label{subsection:ex-calc-1}

Here we consider a particular instance of (general cubic)
surjective maps $f$ with the locus $I_f$ consisting of five points
in general position. According to the reasoning outlined in
Section~\ref{section:prof} such maps correspond to certain
projections $X \map \p^2$ of a del Pezzo surface $X \subset \p^4$
of degree $\delta = 4$.

Specifically, let us form a linear system $\Lambda \subset
|-K_{\p^2}|$ of all cubics on $\p^2$, which pass through the
points $[1:0:0],\ [0:1:0],\ [0:0:1],\ [1:1:1],\ [2:3:1]$. Ideally,
given a plane $\Pi \subset \Lambda \simeq \p^4$, one wants an
\emph{explicit} way of checking whether the corresponding map $f:
\p^2 \dashrightarrow \Pi^*$ is surjective. In other words, one
wants to exclude (explicitly) all unruly pencils in $\Pi$, as we
have discussed in Remark~\ref{remark:surj-robust}. The most
straightforward way is to restrict everything to a finite field
$\mathbb{F}_p$ and list all the possibilities. Here is a
Python-Sage code doing this:\footnote{To lighten the calculations
we consider $\Lambda$ over $\mathbb{F}_7$ (cf. generators of the
ideal $I$ below).}

\medskip
{\small
\begin{lstlisting}

from sage.all import *

import numpy as np

# specify a finite field and algebraic data over it

k = GF(Integer(2)); V = VectorSpace(k,5); VV = VectorSpace(k,3)

R = k['x, y, z']; (x, y, z,) = R._first_ngens(3)

PP = ProjectiveSpace(Integer(2), k, names=('x', 'y', 'z',)); (x,
y, z,) = PP._first_ngens(3)

# ideal defining the set of points [1:0:0], [0:1:0], [0:0:1],

# [1:1:1], [2:3:1]

I = ideal(x^2*y + 3*y^2*z - x*z^2 - 3*y*z^2, x*y^2 + 3*y^2*z -
2*x*z^2 - 2*y*z^2, x^2*z - y^2*z + 2*x*z^2 - 2*y*z^2, x*y*z +
3*x*z^2 + 3*y*z^2, z*(x - y)*(y - x - z))

B = I.groebner_basis(deg_bound=Integer(3))

w = vector(B)

# compute number of irreducible component in a scheme s

def irred(s):
    ir_comp = s.irreducible_components()
    return len(ir_comp)

# create P2 in the given subspace of cubics

def make_plane(v, u, t):
    p = ideal(v.dot_product(w), u.dot_product(w), t.dot_product(w))
    return p

# number of base points in a given P2

def base_locus_length(v, u, t):
    some_plane = make_plane(v, u, t)
    base_loc = PP.subscheme(some_plane)
    return irred(base_loc)

# create a pencil in the given P2

def make_pencil(v, u, t, a, b):
    s_plane = make_plane(v, u, t)
    c_base = s_plane.groebner_basis(deg_bound=Integer(3))
    v_tran = vector(c_base)
    penc = ideal(a.dot_product(v_tran), b.dot_product(v_tran))
    return(PP.subscheme(penc))

# for given P2 inspect over all its pencils and return 0 if base

# loci of P2 and a pencil in it have the same length

def num_surjective(v, u, t):
    x = 0
    for a in VV:
        for b in VV:
            vari = make_pencil(v, u, t, a, b)
            if vari.dimension() > 0:
                        continue
            else:
                if irred(vari) == base_locus_length(v, u, t):
                            return 0
                            return
                else:
                    x = 1
                    continue
    return x

# auxiliary function: sum of squares of coordinates

def sq_coord(vect):
    s = sum(x**2 for x in vect)
    return s

# auxiliary function: check whether a collection of three vectors

# is orthonormal => restrict to such v, u, t below

def is_orth(vectors):
    for i in range(3):
        if sq_coord(vectors[i]) != 1:
               return False
    for j in range(i + 1, 3):
        if np.dot(vectors[i], vectors[j]) != 0:
            return False
    return True

# list all dominant maps and print them into file

dic_val = {}

for v in V:
    for u in V:
         for t in V:
                var = PP.subscheme(make_plane(v, u, t))
                if matrix([v, u, t]).rank() < 3 or var.dimension() > 0:
                                    continue
                else:
                    if __name__ == "__main__":
                               vectors = np.array([tuple(v), tuple(u), tuple(t)])
                               if is_orth(vectors) == False:
                                        continue
                               else:
                                     dic_val[tuple(v), tuple(u), tuple(t)] =
                                     num_surjective(v, u, t)


filename = 'output.txt'

with open(filename, 'w') as file:
    for key, value in dic_val.items():
        file.write(f"{key}: {value}\n")

print(f"Values have been written to {filename}")

\end{lstlisting}
}

\medskip

The program lists all vectors {\texttt{v, u, t}} from $\Lambda$,
spanning various planes $\Pi$ as above, and writes the result to
the file \texttt{output.txt} in the form \texttt{(v, u, t)}:
\texttt{value}. Here \texttt{value} equals either ${\bf 1}$ or
${\bf 0}$, depending on whether the corresponding map $f$ is
surjective or not, respectively.

To reduce the amount of calculations we inspect only those vectors
$\texttt{v, u, t}$ that form an orthonormal basis in $\Pi$ (cf.
the function $\verb"is_orth"$). However, even under this
restriction our program was able to generate \texttt{output.txt}
only for $p = 2$, as for larger $p$ the computation becomes
unreasonably long.\footnote{In this respect, it is worth
developing an algorithm, which will not distinguish between
\emph{projectively equivalent} triples $\texttt{(v, u, t)}$.} But
still, these experimental results over $\mathbb{F}_2$ allow one to
obtain non-trivial examples of surjective $f$, now defined over
$\com$.

Namely, let us turn to the case no. {\bf 46} in
\texttt{output.txt}, with $\texttt{(v, u, t)}$ being
$((1,0,0,0,0),\ (0,0,0,1,0),\ (1,1,0,0,1))$ and \texttt{value}
equal ${\bf 1}$. The linear system $\Lambda$ is spanned by the
cubics
$$
x^2y + y^2z,\ xy^2 + y^2z,\ x^2z + y^2z,\ xyz,\ xz^2 + yz^2
$$
(cf. vector \texttt{w} in the above code) --- please mind that all
this is over $\mathbb{F}_2$.\footnote{The shape of the components
of \texttt{w} is more complicated when $p \ge 5$.} Now let us
(formally) extend $\Lambda$ with the present basis to be defined
over $\com$.

Thus we are considering $f: \p^2 \dashrightarrow \p^2$ given by
$$
[x: y: z] \mapsto [x^2y + y^2z: xyz: x^2y + xy^2 + 2y^2z + z^2(x +
y)].
$$
Note that $I_f = \text{Bs}(\Lambda) = \{[1:0:0],\ [0:1:0],\
[0:0:1]\}$, i.e. the points $[1:1:1]$ and $[2:3:1]$ got lost in
the way, as results from the mod $2$ reduction and extension to
$\com$ procedure. We claim that

\begin{prop}
\label{theorem:4-dp-surj} The given map $f$ is surjective.
\end{prop}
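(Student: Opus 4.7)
The plan is to establish the surjectivity of $f$ by exhibiting, for every $o = [a:b:c] \in \p^2$, a preimage in $\p^2 \setminus I_f$. The central algebraic observation is the factorization
\[
bf_0 - af_1 \;=\; y\bigl(bx^2 + z(by - ax)\bigr) \;=:\; y \cdot h_1,
\]
so the fiber $f^{-1}(o)$, cut out by the $2\times 2$ minors of $\bigl(\begin{smallmatrix} f_0 & f_1 & f_2 \\ a & b & c \end{smallmatrix}\bigr)$, is contained in $\{y=0\} \cup \{h_1 = 0\}$.

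I first dispose of the ``degenerate'' point $o = [0:0:1]$ using the line $\{y = 0\}$: on it $f_0 \equiv f_1 \equiv 0$ while $f_2 = xz^2$, so every $[x:0:z]$ with $xz \ne 0$ is a preimage. This handles the ``worst'' $o$, where an entire curve collapses to the target.

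For $o \ne [0:0:1]$ with $b \ne 0$, the conic $\{h_1 = 0\}$ is smooth (its symmetric matrix has determinant $-b^3/4$) and avoids $P_1 = [1:0:0]$. I would intersect it with the cubic $g := cf_1 - bf_2$, another minor defining the fiber. B\'ezout gives $\#(h_1 \cap g) = 6$, and a direct local calculation shows that, under the generic condition $a + b \ne 0$, the intersection multiplicities of $h_1$ and $g$ at $P_2$ and $P_3$ are each $1$. Hence $6 - 2 = 4$ of the six intersection points lie outside $I_f$ and provide the required preimages.

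The main obstacle lies in two degenerate loci: the line $\{b = 0\}$, on which $h_1 = -axz$ becomes reducible and shares the component $\{z=0\}$ with $g = c\,xyz$; and the line $\{a + b = 0\}$, on which the tangent directions of $h_1$ and $g$ at $P_3$ coincide and the local intersection multiplicity jumps, potentially absorbing the residual point. Both are dispatched by direct substitution. For $o = [a:0:c]$ on $\{b = 0\}$, factoring $f_1 = xyz$ into its three line components yields the preimages $[0:a:c-2a]$ (valid when $a(c-2a)\ne 0$) and $[a:c-a:0]$ (valid when $a(c-a)\ne 0$), and together these cover $\{b=0\} \setminus \{[0:0:1]\}$. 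For $o = [-1:1:c]$ on $\{a+b=0\}$, substituting $z = -x^2/(x+y)$ into the fiber equations reduces the problem to the cubic $t^3 + (c+1)t + 1 = 0$ in $t := y/x$, and any complex root produces a preimage $[1:t:-1/(1+t)] \notin I_f$. Combined with the generic residual argument, these explicit patches exhaust $\p^2$ and establish surjectivity.
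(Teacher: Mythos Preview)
Your argument is correct and takes a genuinely different route from the paper's. The paper proceeds by brute-force elimination in affine charts: after disposing of the line $L = \{y=0\}$ via explicit preimages, it sets $z=1$, solves $f_0/f_1 = a$ for $y$ in terms of $x$, substitutes into $f_2/f_1 = b$, and analyzes the resulting quartic in $x$ case by case (the degenerate cases being $a = -1$ and then $b = -1$). Your approach is more geometric: the factorization $bf_0 - af_1 = y \cdot h_1$ singles out a smooth conic through $P_2, P_3$ (and missing $P_1$), and B\'ezout against the cubic $g = cf_1 - bf_2$ immediately gives residual preimages once the local multiplicities at $P_2, P_3$ are pinned down. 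This replaces the paper's quartic analysis by a transversality check at two base points, and it explains structurally why the only degenerate locus (beyond $b=0$) is $\{a+b=0\}$: that is exactly where the tangents of $h_1$ and $g$ at $P_3$ coincide. Two small points deserve a sentence each in a final write-up: (i) B\'ezout requires that the irreducible conic $h_1$ not be a component of $g$, which is true but unstated (compare, e.g., the $xy^2$-coefficients); (ii) in the $a+b=0$ case, ``any complex root'' of $t^3 + (c+1)t + 1$ should read ``some root $t \ne -1$'', and such a root exists since $t=-1$ is at most simple (it occurs only for $c=-1$, with quotient $t^2 - t + 1$).
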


\begin{proof}
Put $P := [1:0:0]$. Then it is immediate that $f^{-1}(P) \ni
[0:1:-2]$. Next we take $Q := [0:0:1]$ and observe that $f^{-1}(Q)
\ni [1:0:1]$.

Further, consider the line $L = (y = 0) \ni P,Q$ and pick an
arbitrary point $[1:0:a] \in L \setminus\{P,Q\}$. Then it is
immediate that $f([0:1:a - 2]) = [1:0:a]$. Thus the map $f$ is
\emph{surjective over $L$}.

Let us now verify surjectivity over the complement $\p^2 \setminus
L$. Any point in the latter is of the form $[a:1:b]$ for some $a$
and $b \in \com$. Hence we must have $xyz \ne 0$. Then, letting $z
= 1$, it suffices to show that the point $(a,b) \in \com^2$ has a
preimage under the map
$$
(x,y) \mapsto (x + \frac{y}{x}, x + y + \frac{2y}{x} + \frac{1}{x}
+ \frac{1}{y})
$$
(we have divided the components of $f$ by $xy$). The condition $x
+ \displaystyle\frac{y}{x} = a$ gives $y = ax - x^2$ (with
non-zero $x$) and for the second component we get
$$
x^2y + xy^2 + 2y^2 + x + y = bxy.
$$
After substituting the expression for $y$ and dividing by $x$ we
obtain the equation
\begin{equation}
\label{eq-x-1} x^2(a - x) + x^2(a - x)^2 + 2x(a - x)^2 + 1 + a - x
- bx(a - x) = 0.
\end{equation}
Note that $x = 0$ forces $a = -1$. Next we bring \eqref{eq-x-1} to
the form
$$
x^4 + (1 - 2a)x^3 + (a^2 - 3a + b)x^2 + (2a^2 - ba - 1)x + a + 1 =
0.
$$
For $a \ne -1$, the latter equation has a non-zero solution, and
so $(a,b)$ has a preimage by our construction. Assume that $a =
-1$. Then the equation becomes (after dividing by $x$)
$$
x^3 + 2x^2 + (b + 4)x + b + 1 = 0.
$$
Again, if $b \ne -1$, then we are done. Finally, in the case of $b
= -1$ we get quadratic equation $x^2 + 2x + 3 = 0$, which has a
non-zero solution. This completes the proof that $f$ is
surjective.
\end{proof}

\refstepcounter{equation}
\subsection{Entering ML}
\label{subsection:ex-calc-2}

The above reasoning aims to illustrate a general strategy for
finding non-trivial surjective rational endomorphisms of $\p^2$:
make predictions over (small) finite field (cf. file
\texttt{output.txt}) and then inspect those $\texttt{v, u, t}$,
which ``likely give'' surjective $f$ (cf. \texttt{value = {\bf 1}}
condition). The latter requires, as
Proposition~\ref{theorem:4-dp-surj} reveals, quite tedious
(although direct) calculations. For the former, a seemingly
important question is:
$$
\text{how to make ``predictions of surjectivity'' for
\emph{arbitrary} vectors}\ \texttt{v, u, t},\ \text{not
necessarily over}\ \mathbb{F}_p?
$$
One way to answer this, as became common nowadays, is to generate
a ``prediction function'' via neural network by inspecting the
table from \texttt{output.txt} (cf. the discussion in
{\ref{subsection:introd-3}}). Here is the corresponding code:

\medskip

{\small
\begin{lstlisting}

import ast import numpy as np

import tensorflow as tf

import matplotlib.pyplot as plt

from sklearn.model_selection import train_test_split

from sklearn.preprocessing import StandardScaler

from keras.models import Sequential

from keras.layers import Dense, Conv2D, Flatten

# create a dictionary from the given file

def read_dict_file(filename: str) -> dict:
    result = {}
    with open(filename, 'r') as file:
        for line in file:
            key, value = line.strip().split(':')
            result[ast.literal_eval(key)] = int(value)
    return result

if __name__ == "__main__":
    filename = 'output.txt'
    my_dict = read_dict_file(filename)

# function we are going to learn

def func_from_dict(u, v, w):
    return my_dict[u, v, w]

# prepare data (not scaled)

X_first = np.array(list(my_dict.keys()))

Y = np.array(list(my_dict.values())).reshape(-1, 1)

# scale data

scaler_Y = StandardScaler()

Y_train = scaler_Y.fit_transform(Y_train)

Y_test = scaler_Y.transform(Y_test)

X_list = []

for matrix in X_first:

    matrix_reshaped = matrix.reshape(-1,matrix.shape[-1])
    scaler_X = StandardScaler()
    scaled_matrix = scaler_X.fit_transform(matrix_reshaped)
    scaled_matrix = scaled_matrix.reshape(matrix.shape)
    X_list.append(scaled_matrix)

X = np.array(X_list)

# split into train and test sets

X_train, X_test, Y_train, Y_test = train_test_split(X, Y,
test_size=0.2, random_state=42)

# build neural network

model = Sequential() model.add(Conv2D(256, kernel_size=(2,2),
activation='relu', input_shape=(3,5,1))) model.add(Flatten())
model.add(Dense(256, activation='relu')) model.add(Dense(1))

# compile model

model.compile(optimizer='adam', loss='mean_squared_error',
metrics=['accuracy'])

# model summary

model.summary()

# train model

model.fit(X_train, Y_train, epochs=150, batch_size=32)

# evaluate model

loss, accuracy = model.evaluate(X_test, Y_test)

print(f'test loss: {loss}')

print(f'test accuracy: {accuracy}')

# make predictions

Y_learn = model.predict(X_test)

Y_learn =scaler_Y.inverse_transform(Y_learn)

print(f'predictions: {Y_learn}')

\end{lstlisting}
}

\medskip

Our model is based on the Keras API. The NN's configuration (one
of many possible) includes the input layer with $256$ neurons and
one hidden layer with $256$ neurons (plus a convolution layer).
After training the model the pertinent prediction function
$Y_{\text{learn}}$ is obtained.\footnote{We did not intend to be
in any way optimal here and the test loss (resp. accuracy) turns
out to be $0.078$ (resp. $0.938$). We also do not care for any
sort of interpretability (but of course, applying KAN neural
network, say, in our setting seems very intriguing --- see
\cite{KAN}).} Its average against the test data can be found:

\begin{lstlisting}

s = Y_learn.sum()

t = len(X_test)

print(s/t)

\end{lstlisting}
and is equal to $0.0737$. These calculations reveal a curious fact
that over the field $\mathbb{F}_2$ (over which \texttt{output.txt}
has been generated) the analog of Theorem~\ref{theorem:main}
\emph{does not hold}: for average rational map $f$ (aka
\texttt{(v, u, t)}), its $Y_{\text{learn}}$-value is close to $0$.

Thus with \emph{any} $f$ one may associate its \emph{measure of
surjectivity} $Y_{\text{learn}}(f)$ and ask to what extent does
the latter reflect the corresponding geometric property of the
given map. The preceding computer experiments, as well as
Proposition~\ref{theorem:4-dp-surj} and its proof (see also
{\ref{subsection:ex-calc-3}}), show that this is a difficult
problem in general.

\refstepcounter{equation}
\subsection{Cubic case}
\label{subsection:ex-calc-3}

Let us add one more point $[3:2:1]$ to the base locus of $\Lambda$
from {\ref{subsection:ex-calc-1}}. This corresponds to erasing the
term \texttt{z*(x - y)*(y - x - z)} in the ideal \texttt{I} from
the above Python-Sage code.\footnote{Once again everything is over
$\mathbb{F}_7$ for simplicity.} Now, after running the script
(with \texttt{V = VectorSpace(k,4)}), the file \texttt{output.txt}
is generated in a reasonable time again only for $p = 2$. One can
see that now \texttt{value} equals ${\bf 0}$ \emph{for all}
triples \texttt{(v, u, t)}. Thus $Y_{\text{learn}}(f) = 0$ for all
$f$, i.e. they are ``close to be non-surjective'', according to
the heuristic principle in {\ref{subsection:ex-calc-2}}. Yet,
amazingly, one can find surjective maps among these $f$. Let us
proceed as earlier.

Namely, the basis of $\Lambda$ now consists of cubics
$$
x^2y + y^2z + z^2(x + y),\ y^2(x - z),\ z(x^2 + y^2),\ xyz + xz^2
+ yz^2,
$$
and we consider the triple $((1,0,0,0), (0,1,0,0), (0,0,0,1))$.
Then the corresponding map $f: \p^2 \dashrightarrow \p^2$ is given
by
$$
[x: y: z] \mapsto [x^2y + y^2z + z^2(x + y): y^2(x - z): xyz +
z^2(x + y)].\footnote{Here, as in {\ref{subsection:ex-calc-1}},
all data is considered over $\com$. Hence we distinguish $y^2(x -
z)$ and $y^2(x + z)$ from the initial $\mathbb{F}_2$-basis of
$\Lambda$.}
$$
Again we have $I_f = \text{Bs}(\Lambda)$ (over both $\mathbb{F}_2$
and $\mathbb{F}_7$).

\begin{prop}
\label{theorem:3-dp-surj} The given $f$ is surjective.
\end{prop}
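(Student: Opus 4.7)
The plan is to follow the template of the proof of Proposition~\ref{theorem:4-dp-surj}: first cover a distinguished line $L$ in the target by direct inspection, then reduce surjectivity over the complementary affine plane to solving a univariate polynomial equation, and finally dispose of the finitely many degenerate pairs $(a,b)$ by ad hoc case analysis.

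\medskip

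\textbf{Step 1.} A direct substitution shows that $f([x:y:0]) = [x^{2}y : xy^{2} : 0] = [x:y:0]$ for $xy \ne 0$, so the restriction of $f$ to the source line $\{z=0\}$ surjects onto the target line $L := \{z = 0\}$ outside the two coordinate points $[1:0:0]$ and $[0:1:0]$. Each of these two points admits an explicit preimage, found by solving a small system of polynomial equations; for instance one checks directly that $f([2:-1:2]) = [1:0:0]$. Hence $L$ lies entirely in the image of $f$.

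\medskip

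\textbf{Step 2.} For a target point $[a:b:1] \notin L$, I would look for a source preimage $[x:y:1]$. Setting $\lambda := xy + x + y$, the preimage condition becomes
\begin{equation*}
x^{2}y + y^{2} + x + y \;=\; a\lambda, \qquad y^{2}(x-1) \;=\; b\lambda.
\end{equation*}
The subcase $y = 0$ produces only the single target $[1:0:1]$, so assume $y \ne 0$. Then the second equation yields $x = y(y+b)/D(y)$ where $D(y) := y^{2} - by - b$, valid whenever $D(y) \ne 0$. Substituting into the first equation and clearing the denominator $D(y)^{2}$, one obtains --- after factoring out two automatic powers of $y$ --- a \emph{monic quartic} equation $Q_{a,b}(y) = 0$ in $\com[y]$. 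The fundamental theorem of algebra supplies four complex roots of $Q_{a,b}$, and any root $y_{0}$ satisfying $y_{0} \ne 0$ and $D(y_{0}) \ne 0$ lifts to a valid preimage $[x_{0} : y_{0} : 1]$ of $[a:b:1]$.

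\medskip

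\textbf{Step 3 (main obstacle).} The heart of the argument is to rule out pairs $(a,b) \in \com^{2}$ for which every root of $Q_{a,b}$ fails one of the constraints $y_{0} \ne 0$ or $D(y_{0}) \ne 0$. Since the forbidden roots lie in a set of at most three values --- namely $0$ together with the (at most two) roots of $D(y) = y^{2} - by - b$ --- a pigeon-hole count forces $Q_{a,b}$ to have a repeated root in this scenario, so the bad locus is contained in the discriminant hypersurface of $Q_{a,b}$, a proper algebraic subset of $\com^{2}$. I would enumerate its finitely many irreducible components explicitly (as polynomial conditions on $(a,b)$) and supply an ad hoc preimage on each, exactly in the spirit of the special branches $a=-1$ and $a=b=-1$ handled at the end of the proof of Proposition~\ref{theorem:4-dp-surj}. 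This case analysis is routine but somewhat tedious; it is where the bulk of the effort lies.
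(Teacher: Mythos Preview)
Your plan follows the same template as the paper --- cover a line $L$ by hand, then reduce surjectivity over $\p^2\setminus L$ to a univariate quartic --- but with a different choice of chart, and that choice is exactly what leaves Step~3 unfinished.

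The paper takes $L=\{y=0\}$, normalizes $y=1$ on the complement, and (after the substitution $(a,b)\mapsto(a-b,b)$) solves the first component linearly for $z$ in terms of $x$. The crucial gain is that the forbidden set for the quartic variable then has only \emph{two} elements, $x=0$ and $x=1+a$; moreover the relation $\frac{x^{2}+z-xz}{x-z}=a$ forces $1+a=0$ whenever $x=1+a$, so the two forbidden values collapse. A monic quartic with all roots equal to $0$ would have to be $x^{4}$, and two of its non-leading coefficients are visibly incompatible. That is the whole endgame --- three lines.

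In your chart ($z=1$, solve the second component for $x$) the forbidden set $\{0\}\cup D^{-1}(0)$ has three elements, and your pigeonhole reduction only shows the bad locus sits inside the discriminant curve of $Q_{a,b}$. That curve is one-dimensional, so ``supply an ad hoc preimage on each irreducible component'' is not a finite check: it is another surjectivity problem, now over a curve, and it is precisely the work you have deferred. To finish along your lines you would really need to pin down the much smaller locus where \emph{all} four roots of $Q_{a,b}$ lie in $\{0\}\cup D^{-1}(0)$ (e.g.\ by casework on the factorizations $y^{4},\,y^{3}(y-r_{i}),\,y^{2}D,\,yD(y-r_{i}),\,D^{2}$), which cuts things down to finitely many $(a,b)$; but none of that is carried out.

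A small loose end in Step~1: you exhibit a preimage of $[1:0:0]$ but only assert one for $[0:1:0]$. Working it out, preimages of $[0:1:0]$ with $z=1$ satisfy $xy+x+y=0$ and $x^{2}y+y^{2}+x+y=0$, which reduces to $y^{2}+4y+2=0$; so a preimage exists, but it is not rational and not quite as immediate as ``a small system of polynomial equations'' suggests.
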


\begin{proof}
Put $P := [1:0:0]$. Then it is immediate that $f^{-1}(P) \ni
[-2:1:-2]$. Next we take $Q := [0:0:1]$ and observe that
$f^{-1}(Q) \ni [-1:1:-1]$.

Further, consider the line $L = (y = 0) \ni P,Q$ and pick an
arbitrary point $[a:0:1] \in L \setminus\{P,Q\}$. Then for
$[x:y:z] \in f^{-1}([a:0:1])$ we get the following conditions:
$$
x = z = 1,\quad \frac{(y + 1)^2}{2y + 1} = a,\ y \ne -0.5.
$$
This yields quadratic equation $y^2 + 2(1 - a)y + 1 - a = 0$. The
latter always has a solution $\zeta \ne -0.5$. In particular, we
obtain that $[1:\zeta:1]$ is contained in $f^{-1}([a:0:1])$, and
hence the map $f$ is \emph{surjective over $L$}.

Let us now verify surjectivity over the complement $\p^2 \setminus
L$. Any point in the latter is of the form $[a:1:b]$ for some $a$
and $b \in \com$. Hence we must have $y \ne 0$. Then, letting $y =
1$, it suffices to show that the point $(a,b) \in \com^2$ has a
preimage under the map
$$
(x,z) \mapsto (\frac{x^2 + z + xz^2 + z^2}{x - z}, \frac{xz + xz^2
+ z^2}{x - z})
$$
or, after transforming $(a,b) \mapsto (a - b, b)$, under the map
$$
(x,z) \mapsto (\frac{x^2 + z - xz}{x - z}, \frac{xz + z + xz^2 +
z^2}{x - z}).
$$
The condition $\displaystyle\frac{x^2 + z - xz}{x - z} = a$ gives
$z = \displaystyle\frac{ax - x^2}{1 + a - x}$ (with $x \ne 1 + a,
z$), and for the second component we get
$$
xz + z + xz^2 + z^2 = b(x - z).
$$
Now, substituting, dividing by $x$ and clearing denominators gives
the equation
\begin{equation}
\label{q-eq-zer} x^4 + (-2a + 2)x^3 + (a^2 - 4a)x^2 + (2a^2 - a +
b - 1)x + a^2 + a(1 - b) - b = 0.
\end{equation}

Note that the condition $z = \displaystyle\frac{ax - x^2}{1 + a -
x} = x$ forces $x = 0$. Quartic polynomial in \eqref{q-eq-zer} has
only this value as its root iff it coincides with $x^4$. This
gives two incompatible relations $2a - 2 = 0 = a^2 - 4a$ (for the
coefficients at $x^3$ and $x^2$). Hence we have verified that $x
\ne z$.

It remains to deal with the case when possibly $x = 1 + a$. After
substituting in \eqref{q-eq-zer} one gets $a^2 + 3a + 2 = 0$. On
the other hand, from the expression $\displaystyle\frac{x^2 + z -
xz}{x - z} = a$, with $x = 1 + a$, we obtain $a = -1$, a
contradiction. This completes the proof that $f$ is surjective.
\end{proof}

\bigskip

\bigskip

\thanks{{\bf Acknowledgments.}
I am grateful to V. Sakbaev for giving me an opportunity to
present the results of this paper. The work was carried out at the
Center for Pure Mathematics (MIPT) and was partially supported by
the Russian Science Foundation under grant \textnumero\,
25-21-00083 (https://rscf.ru/project/25-21-00083/).

\bigskip

\end{document}